\definecolor{red}{RGB}{255, 25, 25}
\definecolor{blue}{RGB}{25, 50, 200}
\def\MR#1{\href{http://www.ams.org/mathscinet-getitem?mr=#1}{MR#1}}
\newtheorem{theorem}{Theorem}[section]
\newtheorem{lemma}[theorem]{Lemma}
\theoremstyle{definition}
\newtheorem{definition}[theorem]{Definition}
\newtheorem{question}[theorem]{Question}
\theoremstyle{remark}
\newtheorem{remark}[theorem]{Remark}
\newtheorem{step}{\sc Step}
\numberwithin{equation}{section}
\renewcommand{\lhd}{\trianglelefteq}
\newcommand{\wtilde}[1]{\widetilde{#1}}
\newcommand{\wbar}[1]{\overline{#1}}
\newcommand{\lra}{\longrightarrow}
\newcommand{\comp}{c_G}
\newcommand{\bk}{\mathbbm{k}}
\newcommand{\bG}{G} 
\newcommand{\bF}{\mathbb{F}}
\newcommand{\red}{{\rm red}}
\newcommand{\aff}{{\rm aff}}
\newcommand{\ant}{{\rm ant}}
\newcommand{\GL}{\mathrm{GL}}
\newcommand{\SL}{\mathrm{SL}}
\newcommand{\aut}{\mathbf{Aut}}
\newcommand{\Aut}{\operatorname{Aut}}
\newcommand{\Bir}{\operatorname{Bir}}
\newcommand{\ch}{\operatorname{char}}
\newcommand{\Cr}{\operatorname{Cr}}
\newcommand{\isom}{\simeq}
\newcommand{\rank}{\operatorname{rank}}
\begin{document}

\title[Jordan property for algebraic groups and automorphism groups]{Jordan property for algebraic groups and automorphism groups of projective varieties in arbitrary characteristic}

\author{Fei Hu}
\address{\textsc{Department of Mathematics, University of British Columbia, 1984 Mathematics Road, Vancouver, BC V6T 1Z2, Canada}
\endgraf \textsc{Pacific Institute for the Mathematical Sciences, 2207 Main Mall, Vancouver, BC V6T 1Z4, Canada}}
\email{\href{mailto:fhu@math.ubc.ca}{\tt fhu@math.ubc.ca}}

\begin{abstract}
We show an analogue of Jordan's theorem for algebraic groups defined over a field $\bk$ of arbitrary characteristic.
As a consequence, a Jordan-type property holds for the automorphism group of any projective variety over $\bk$.
\end{abstract}

\subjclass[2010]{14G17, 14L10}


\keywords{positive characteristic, Jordan property, algebraic group}

\thanks{The author was partially supported by a UBC-PIMS Postdoctoral Fellowship.}

\maketitle



\section{Introduction} \label{section-intro}


\noindent
In 1878, Camille Jordan \cite{Jordan1878} proved the following remarkable theorem.

\begin{theorem}[{cf.~\cite{Jordan1878}}] \label{theorem-Jordan}
For any positive integer $n$, there exists a constant $J(n)$ such that any finite subgroup $\Gamma$ of $\GL_n$ over a field of characteristic zero contains a normal abelian subgroup $A$ of index $\le J(n)$.
\end{theorem}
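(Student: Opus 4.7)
The plan is to reduce the problem to finite subgroups of the unitary group $U(n)$, where we can exploit the geometry near the identity. Since $\Gamma \subseteq \GL_n(\bk)$ is finite, its matrix entries lie in a finitely generated $\bZ$-subalgebra of $\bk$; as $\ch \bk = 0$, this subalgebra embeds into $\bC$, so it suffices to handle $\Gamma \subseteq \GL_n(\bC)$. Averaging any positive-definite Hermitian form over $\Gamma$ produces a $\Gamma$-invariant Hermitian form, so after conjugation we may assume $\Gamma \subseteq U(n)$. The point of this reduction is that $U(n)$ is compact and the operator norm $\|\cdot\|$ is invariant under conjugation by unitary matrices.

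Fix a small $\eps > 0$ and set $V := \{g \in U(n) : \|g - I\| < \eps\}$, a symmetric, conjugation-invariant open neighborhood of $I$. The crucial input, obtained by expanding commutators via the Baker--Campbell--Hausdorff formula on the Lie algebra $\mathfrak{u}(n)$, is an estimate of the shape $\|[g,h] - I\| \leq C\, \|g - I\|\, \|h - I\|$ for $g, h \in V$, with $C = C(n)$. For $\eps$ small enough this is a strict contraction, so iterating commutators of elements of $\Gamma \cap V$ produces a sequence inside $V$ converging to $I$; finiteness of $\Gamma$ forces the sequence to terminate at $I$, so $A := \langle \Gamma \cap V \rangle$ is nilpotent. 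A refinement of the minimal-norm argument, applied to the lower central series of $A$ and using that the deepest nontrivial term would consist of elements of smaller norm than permitted, then upgrades $A$ from nilpotent to abelian.

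For normality, $V$ (and hence $\Gamma \cap V$) is stable under conjugation by $\Gamma \subseteq U(n)$, so $A$ is normal in $\Gamma$. To bound the index, observe that whenever $g_1, g_2 \in \Gamma$ satisfy $g_1 g_2^{-1} \in V$ one has $g_1 g_2^{-1} \in A$, so $g_1$ and $g_2$ represent the same coset of $A$; hence distinct coset representatives form a subset of $U(n)$ whose pairwise operator-norm distances are all at least $\eps$. By compactness of $U(n)$, such a subset has cardinality bounded by a constant $J(n)$ depending only on $n$ (and on the fixed $\eps$), which is the desired uniform bound on $[\Gamma : A]$.

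The main obstacle is the Lie-theoretic heart of the argument in the second paragraph: establishing the quantitative commutator-contraction estimate and converting ``iterated commutators shrink'' into ``$A$ is abelian,'' rather than merely nilpotent. This rests essentially on having a well-behaved logarithm and a convergent BCH expansion, which is exactly where the characteristic-zero hypothesis is used; in positive characteristic one loses both the averaging trick into a compact form and the Lie-exponential analysis, which is precisely why the main body of the paper must develop entirely different tools.
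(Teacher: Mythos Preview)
The paper does not give its own proof of this statement: Theorem~\ref{theorem-Jordan} is stated in the introduction as Jordan's classical 1878 result and is used thereafter as a black box (it feeds into Step~\ref{alggp-step1} of the proof of Theorem~\ref{theorem-alg-gp} and, via \cite[2.9]{BF66}, into the positive-characteristic discussion). So there is no ``paper's proof'' to compare against.

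As a standalone argument your outline is the standard Frobenius--Bieberbach approach and is essentially correct. Two small remarks. First, the commutator contraction does not actually require BCH: writing $[g,h]-I = g^{-1}h^{-1}\bigl((g-I)(h-I)-(h-I)(g-I)\bigr)$ and using unitarity gives $\|[g,h]-I\|\le 2\,\|g-I\|\,\|h-I\|$ directly, so the Lie-theoretic apparatus is optional here. Second, the passage you flag as the main obstacle---going from ``iterated commutators shrink'' to ``$A$ is abelian''---is indeed the delicate point, and your description (``a refinement of the minimal-norm argument applied to the lower central series'') is a gesture rather than a proof. The clean way to close it is: choose $g\in(\Gamma\cap V)\setminus\{I\}$ of minimal distance to $I$ among elements that fail to commute with some $h\in\Gamma\cap V$; then $g'=[g,h]$ lies in $\Gamma\cap V$, is strictly closer to $I$, hence is central in $\langle\Gamma\cap V\rangle$; now $[g^n,h]=(g')^n$ for all $n$, and a short eigenvalue/norm analysis forces $g'=I$, a contradiction. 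Once that lemma is in hand, normality (conjugation-invariance of $V$) and the index bound (packing in the compact group $U(n)$) go exactly as you wrote.
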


However, the above theorem is false for fields of characteristic $p > 0$ due to the existence of unipotent elements of finite order.
For instance, the group $\GL_n(\wbar \bF_p)$ contains arbitrarily large subgroups of the form $\SL_n(\bF_{p^r})$ which are simple modulo their centers.
Nevertheless, for any finite subgroup $\Gamma$ of $\GL_n(\bk)$ of order not divisible by $\ch(\bk)$, there still exists a normal abelian subgroup $A$ of $\Gamma$ with $[\Gamma : A] \le J(n)$ for the same $J(n)$ as in Theorem~\ref{theorem-Jordan} (see e.g. \cite[2.9]{BF66}).
Later, Serre showed that the Cremona group $\Cr_2(\bk)$ of rank $2$ over a field $\bk$ also has this property (cf.~\cite[Theorem~5.3]{Serre09}).
This motivates us to make the following definition.

\begin{definition} \label{def-p-Jordan}
Let $p$ a prime number or zero.
A group $G$ is called a {\it generalized $p$-Jordan group}, if there exists a constant $J(G)$, depending only on $G$, such that every finite subgroup $\Gamma$ of $G$ whose order is not divisible by $p$ contains a normal abelian subgroup $A$ of index $\le J(G)$.
We also call such a group {\it generalized Jordan} if there is no confusion caused (e.g., in practice, $p$ will always denote the characteristic of the ground field $\bk$).
\end{definition}

Note that when $p = 0$, this notion coincides with Popov's \cite[Definition~2.1]{Popov11}.
The above-mentioned results can be reformulated as follows.
Both general linear groups $\GL_n(\bk)$ and the Cremona group $\Cr_2(\bk)$ of rank $2$ are generalized $p$-Jordan, where $p = \ch(\bk)$.
Our first result below shows that in addition to above, any algebraic group is generalized Jordan.

\begin{theorem} \label{theorem-alg-gp}
Any algebraic group $G$ defined over a field $\bk$ of characteristic $p\ge 0$ is generalized $p$-Jordan.
Namely, there exists a constant $J(G)$, depending only on $G$, such that every finite subgroup $\Gamma$ of $G(\bk)$ whose order is not divisible by $p$ contains a normal abelian subgroup of index $\le J(G)$.
\end{theorem}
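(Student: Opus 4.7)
My plan is to first reduce to the case where $\bk$ is algebraically closed and $G$ is connected, then apply Chevalley's structure theorem together with Brion's anti-affine decomposition. Passage to the algebraic closure is harmless since any finite subgroup of $G(\bk)$ is also a finite subgroup of $G(\overline\bk)$. For the reduction to $G$ connected, I use that $[G:G^\circ]$ is finite: a finite $\Gamma\subseteq G(\overline\bk)$ of order coprime to $p$ has $\Gamma\cap G^\circ$ of index at most $[G:G^\circ]$ in $\Gamma$, and a normal abelian subgroup of $\Gamma\cap G^\circ$ of index $\le J(G^\circ)$ supplied by the connected case yields, upon intersecting its at most $[G:G^\circ]$ many $\Gamma$-conjugates, a normal abelian subgroup of $\Gamma$ whose index is bounded in terms of $G$.

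For $G$ connected, I apply Chevalley's theorem to obtain a short exact sequence $1\to L\to G\to A\to 1$ with $L$ a connected linear algebraic group and $A$ an abelian variety. I may further assume $L$ is reductive by quotienting out $R_u(L)$, which is normal in $G$ (being characteristic in $L$) and intersects any finite subgroup of order coprime to $p$ trivially. The essential additional input is Brion's refinement $G=L\cdot G_{\ant}$, where $G_{\ant}$ is the largest anti-affine subgroup and is central in $G$. This forces the conjugation action of $\Gamma$ on $L$ to be inner: the map $\phi\colon G\to L/Z(L)$ sending $\ell b\mapsto \ell\bmod Z(L)$ (for $\ell\in L$, $b\in G_{\ant}$) is a well-defined surjective homomorphism whose kernel $C_G(L)=Z(L)\cdot G_{\ant}$ is commutative; consequently $\Gamma\cap C_G(L)$ lies in the center of $\Gamma$.

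Applying the classical Jordan theorem in its coprime-order form to the image $H:=\phi(\Gamma)\subseteq L/Z(L)\hookrightarrow \GL_n$ produces a normal abelian subgroup $B\lhd H$ of index at most $J(n)$, where $n=n(L/Z(L))$. The preimage $\tilde B:=\phi^{-1}(B)\cap\Gamma$ is then normal in $\Gamma$ of index $\le J(n)$ and fits into a central extension $1\to\Gamma\cap C_G(L)\to\tilde B\to B\to 1$; hence $\tilde B$ is $2$-step nilpotent with $[\tilde B,\tilde B]\subseteq [L,L]\cap Z(L)$. For $L$ reductive, $[L,L]\cap Z(L)\subseteq Z([L,L])$ is a finite group whose order is bounded in terms of $\dim L$. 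The commutator on $\tilde B$ descends to an alternating bilinear pairing $c\colon B\times B\to M:=[\tilde B,\tilde B]$; since $M\subseteq Z(\Gamma)$, this pairing is $\Gamma$-equivariant. Its radical $B':=\{b\in B:c(b,\cdot)=0\}$ is therefore $\Gamma$-invariant, and the preimage $\tilde B':=\phi^{-1}(B')\cap\Gamma$ is abelian and normal in $\Gamma$. The index $[B:B']$ is bounded by $|\Hom(B,M)|\le |M|^{\rank B}$; and since $B$ is a finite abelian subgroup of $L/Z(L)$ of order coprime to $p$, it is diagonalizable and lies in a maximal torus, so $\rank B\le \rank L$. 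This yields a uniform bound on $[\Gamma:\tilde B']$ depending only on $G$.

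The main obstacle I anticipate is this final combining step: the naive approach of intersecting $\Gamma$-conjugates of a normal abelian subgroup of $\Gamma\cap L$ fails as soon as the image $\Gamma/(\Gamma\cap L)\subseteq A$ has unboundedly many elements. The crucial observation resolving this is that Brion's decomposition forces the $\Gamma$-action on $L$ to be inner, channelling all the nontriviality into the central kernel $\Gamma\cap C_G(L)$ and reducing the matter to analyzing a $2$-step nilpotent group whose derived subgroup is contained in the finite group $Z([L,L])$.
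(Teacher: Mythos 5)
Your argument is correct in its overall strategy but takes a genuinely different route from the paper's, modulo one flawed step noted below. Both proofs start from the same decomposition $G=G_\aff\cdot G_\ant$ with $G_\ant$ central (Rosenlicht), and both reduce the non-connected case by intersecting conjugates. But where the paper handles the extension with kernel the torus $(G_\ant)_\aff$ by invoking Lucchini Arteche's lifting theorem (replacing the preimage of $\Gamma$ by a finite subgroup $F$ with small abelian kernel $S$) and then Schur--Zassenhaus, you instead kill $R_u(L)$, observe that $\phi\colon G\to L/Z(L)$ is a well-defined homomorphism onto a linear group with commutative \emph{central} kernel $Z(L)\cdot G_\ant$ (this uses $L\cap G_\ant\subseteq Z(L)$, which holds since $G_\ant$ is central), and then analyze the class-$2$ nilpotent normal subgroup $\tilde B$ via the commutator pairing $B\times B\to [\tilde B,\tilde B]\subseteq [L,L]\cap Z(L)\subseteq Z([L,L])$, a finite group of order bounded in terms of $L$. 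This trades the nonabelian-cohomology input of the paper for an elementary Heisenberg-type argument; the steps I checked (well-definedness of $\phi$, centrality of $\ker\phi$ in $G$, bilinearity of the pairing, normality and abelianness of $\tilde B'=Z(\tilde B)$) all go through.

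The one genuine error is the justification of the bound on $[B:B']$: you claim that $B$, being a finite abelian $p'$-subgroup of $L/Z(L)$, lies in a maximal torus, whence $\rank B\le\rank L$. That is false for non--simply connected reductive groups: for $p\ne 2$ the images of $\diag(1,-1)$ and $\left(\begin{smallmatrix}0&1\\1&0\end{smallmatrix}\right)$ in $\PGL_2$ generate a Klein four-group, which needs two generators although $\rank\PGL_2=1$; taking $L=\SL_2$ and $M=Z(\SL_2)\cong\bZ/2$ one gets $\left|\Hom(B,M)\right|=4>|M|^{\rank L}$, so the stated inequality $[B:B']\le |M|^{\rank L}$ can fail. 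The fix is immediate and preserves your argument: fix an embedding $L/Z(L)\injmap\GL_m$; a finite abelian $p'$-subgroup of $\GL_m(\bk)$ ($\bk$ algebraically closed) consists of commuting semisimple elements, hence is simultaneously diagonalizable and embeds in $(\bk^{\times})^m$, so $B$ is generated by at most $m$ elements and $[B:B']\le\left|\Hom(B,M)\right|\le |M|^{m}$, with $m$ depending only on $G$. Since the theorem asserts only the existence of some constant $J(G)$, this patch restores the proof; the explicit constant you obtain is of course different from the paper's $J(G)=c_G J(n)^{c_G}$.
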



\begin{remark}
Even in characteristic zero, Theorem~\ref{theorem-alg-gp} is nontrivial and has just been proved by Meng and Zhang recently (cf.~\cite[Theorem~1.3]{MZ18}).
The main obstruction is that the Jordan property may not be preserved under group extensions (see e.g. \cite[Remark~2.12]{Popov11} or \cite[Example~7]{Popov14}). See also \cite{Zarhin14} for a counterexample.
Note, however, that the argument in \cite{MZ18} also depends on the Levi decomposition of connected algebraic groups (cf.~\cite{Mostow56}), which is not available in prime characteristic (see e.g. \cite[Remark~4.9]{Brion15} and references therein).
Our approach is based on the investigation of the Jordan property of quotient groups (see e.g. Lemma~\ref{lemma-finite-quotient} and Step~\ref{alggp-step2} in the proof of Theorem~\ref{theorem-alg-gp}).
\end{remark}

Another generalization of Jordan's theorem to prime characteristic was due to Brauer and Feit \cite{BF66} by allowing arbitrary finite subgroup $\Gamma$ of $\GL_n(\bk)$ whose order may be divisible by $p > 0$.
They showed that $\Gamma$ contains a normal abelian subgroup whose index is bounded by a constant depending on $n$ as well as the order of the $p$-Sylow subgroup $\Gamma_{(p)}$ of $\Gamma$. 
Larsen and Pink \cite{LP11} has subsequently extended Brauer--Feit \cite{BF66} as follows.

\begin{theorem}[{cf.~\cite[Theorem~0.4]{LP11}}] \label{theorem-LP}
For any positive integer $n$, there exists a constant $J'(n)$ such that any finite subgroup $\Gamma$ of $\GL_n$ over a field $\bk$ of characteristic $p > 0$ contains a normal abelian $p'$-subgroup $A$ of index $\le J'(n) \cdot |{\Gamma_{(p)}}|^3$.
\end{theorem}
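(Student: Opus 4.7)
The plan combines a structural decomposition of $\Gamma$ obtained from its Zariski closure in $\GL_n$ with the classical order estimate for finite simple groups of Lie type in characteristic $p$.

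First, let $G := \overline{\Gamma}^{\mathrm{Zar}} \subseteq \GL_n$ denote the Zariski closure of $\Gamma$. Using the structure of $G$ (its component group, the unipotent radical of its identity component, and the Chevalley decomposition of the reductive quotient), together with a Jordan-type input for $p'$-subgroups (either the classical statement \cite[2.9]{BF66} or Theorem~\ref{theorem-alg-gp} applied to $G$), one produces a normal series
\[
\Gamma \supseteq \Gamma_1 \supseteq \Gamma_2 \supseteq \Gamma_3 \supseteq \{1\}
\]
with the following properties: $[\Gamma:\Gamma_1]\le J(n)$; $\Gamma_1/\Gamma_2$ is a direct product of finite simple groups of Lie type in characteristic $p$ of rank at most $n$; $\Gamma_2/\Gamma_3$ is a finite abelian $p'$-group; and $\Gamma_3$ is a finite $p$-group.

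Next, for any simple Lie-type group $L$ over $\mathbb{F}_q$ with $r$ simple roots and $N$ positive roots, the Chevalley order formula gives $|L|\le c(n)\cdot q^{2N+r}$, while $|L_{(p)}|=q^N$. Since $r\le N$ holds for every Lie type---the critical case being $A_1$, i.e.~$\mathrm{PSL}_2(\mathbb{F}_q)$, where the exponent~$3$ is sharp---we deduce $|L|\le c(n)\cdot|L_{(p)}|^{3}$. Applying this factor by factor and noting that the Sylow $p$-subgroup of $\Gamma_1$ is the extension of $(\Gamma_1/\Gamma_2)_{(p)}$ by $\Gamma_3$ (because $\Gamma_2/\Gamma_3$ is $p'$), one obtains
\[
|\Gamma_1/\Gamma_2|\le c'(n)\cdot\bigl(|\Gamma_{(p)}|/|\Gamma_3|\bigr)^{3}.
\]
Since $\gcd(|\Gamma_3|,|\Gamma_2/\Gamma_3|)=1$, the Schur--Zassenhaus theorem yields a Hall $p'$-complement $A_0\le\Gamma_2$, automatically abelian as it projects isomorphically to $\Gamma_2/\Gamma_3$. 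Combining the above,
\[
[\Gamma:A_0]=[\Gamma:\Gamma_1]\cdot|\Gamma_1/\Gamma_2|\cdot|\Gamma_3|\le J(n)\cdot c'(n)\cdot\frac{|\Gamma_{(p)}|^{3}}{|\Gamma_3|^{2}}\le J'(n)\cdot|\Gamma_{(p)}|^{3}.
\]
Replacing $A_0$ by the intersection $A:=\bigcap_{g\in\Gamma}gA_0g^{-1}$ yields a normal abelian $p'$-subgroup of $\Gamma$; a Frattini argument using the solvability of $\Gamma_2$ and the uniqueness of Hall $p'$-complements up to $\Gamma_3$-conjugacy shows that only a further bounded index is lost, absorbed into the final constant $J'(n)$.

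The hard part of this program is the very first step: producing the filtration in a CFSG-free manner, as in Larsen and Pink's original argument, requires a delicate analysis of the image of $\Gamma$ in the adjoint representation of its algebraic hull $G$, together with explicit bounds ruling out composition factors that are not of Lie type in characteristic $p$. By comparison, the Lie-type order estimate and the Schur--Zassenhaus bookkeeping are essentially formal.
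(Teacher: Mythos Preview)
The paper does not prove Theorem~\ref{theorem-LP} at all: it is quoted verbatim from \cite[Theorem~0.4]{LP11} and used as a black-box input to the proof of Theorem~\ref{theorem-alg-gp-LP}. So there is nothing to compare your argument against within this paper. Your outline is, in broad strokes, the Larsen--Pink strategy itself: their Theorem~0.2 supplies exactly the filtration $\Gamma\supseteq\Gamma_1\supseteq\Gamma_2\supseteq\Gamma_3$ you describe, and your Lie-type order estimate $|L|\le c(n)\,|L_{(p)}|^{3}$ with $\PSL_2$ extremal is the standard way to pass from that filtration to the index bound.

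That said, your final paragraph contains a genuine gap. After producing the Hall $p'$-complement $A_0\le\Gamma_2$ via Schur--Zassenhaus, you assert that replacing $A_0$ by its normal core $A=\bigcap_{g\in\Gamma}gA_0g^{-1}$ costs ``only a further bounded index \dots\ absorbed into the final constant $J'(n)$''. The Frattini argument you invoke shows only that the number of $\Gamma$-conjugates of $A_0$ is at most $[\Gamma_2:N_{\Gamma_2}(A_0)]\le|\Gamma_3|$; it gives no constant bound on $[A_0:A]$, and in general the core of a complement in a semidirect product $\Gamma_3\rtimes A_0$ can be trivial (take $A_0$ acting faithfully on $\Gamma_3$). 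So the lost index is not bounded by anything depending only on $n$, and cannot be absorbed into $J'(n)$ as written. You do have a spare factor of $|\Gamma_3|^{2}$ in your displayed inequality for $[\Gamma:A_0]$, but you would need to show $[A_0:A]\le|\Gamma_3|^{2}$, which is not what Frattini gives. In Larsen--Pink's own derivation the normality of the abelian $p'$-piece comes from the construction of the filtration (the subgroups $\Gamma_i$ are produced from the algebraic structure of the Zariski closure and are normal in $\Gamma$ from the outset), not from an a posteriori core operation; your sketch elides precisely this point.

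A minor related remark: citing Theorem~\ref{theorem-alg-gp} as a possible source for the first step is harmless logically (its proof does not use Theorem~\ref{theorem-LP}), but it only controls finite $p'$-subgroups and so cannot by itself yield the full filtration for an arbitrary $\Gamma$; you correctly flag this as the hard, CFSG-free part of \cite{LP11}.
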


Here a finite group is called a {\it $p$-group} (resp. {\it $p'$-group}) if its order is some power of $p$ (resp. relatively prime to $p$).
In an analogous way, we introduce the following notion.

\begin{definition} \label{def-s-p-Jordan}
Let $p$ be a prime number.
A group $G$ is called a {\it $p$-Jordan group}, if there exist constants $J'(G)$ and $e(G)$, depending only on $G$, such that every finite subgroup $\Gamma$ of $G$ contains a normal abelian $p'$-subgroup $A$ of index $\le J'(G) \cdot |{\Gamma_{(p)}}|^{e(G)}$.
\end{definition}

Below is our second main result extending Theorem~\ref{theorem-LP} to arbitrary algebraic groups.

\begin{theorem} \label{theorem-alg-gp-LP}
Any algebraic group $G$ defined over a field $\bk$ of characteristic $p > 0$ is $p$-Jordan.
That is, there are constants $J'(G)$ and $e_G$, depending only on $G$, such that any finite subgroup $\Gamma$ of $G(\bk)$ contains a normal abelian $p'$-subgroup $A$ of index $\le J'(G) \cdot |{\Gamma_{(p)}}|^{e(G)}$.
\end{theorem}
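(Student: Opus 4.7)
The plan is to mirror the proof of Theorem~\ref{theorem-alg-gp}, replacing Jordan's classical Theorem~\ref{theorem-Jordan} by the Larsen--Pink Theorem~\ref{theorem-LP} wherever it is used, and carefully tracking the extra $|\Gamma_{(p)}|^3$ factor. A first reduction replaces $G$ by its identity component $G^\circ$: this costs only a bounded factor $[G(\bk):G^\circ(\bk)]$ depending on $G$ alone. One also passes to $\wbar\bk$, which is harmless for finite subgroups of $G(\bk)$.

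Chevalley's structure theorem then writes the connected group $G$ as a short exact sequence
\[
1 \longrightarrow L \longrightarrow G \longrightarrow B \longrightarrow 1,
\]
with $L$ connected affine and $B$ an abelian variety. Fix a faithful embedding $L \hookrightarrow \GL_n$. For a finite subgroup $\Gamma \le G(\wbar\bk)$, set $\Gamma_L := \Gamma \cap L(\wbar\bk)$ and $\wbar\Gamma := \Gamma/\Gamma_L \hookrightarrow B(\wbar\bk)$. Since every finite subgroup of an abelian variety is abelian, $\wbar\Gamma$ is abelian and splits as $\wbar\Gamma_{(p)} \times \wbar\Gamma_{p'}$; replacing $\Gamma$ by the preimage of $\wbar\Gamma_{p'}$ in $\Gamma$ loses only a factor of $|\wbar\Gamma_{(p)}| \le |\Gamma_{(p)}|$ in the index, so we may assume that the quotient $\wbar\Gamma$ is a finite abelian $p'$-group. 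Applying Theorem~\ref{theorem-LP} to $\Gamma_L \le \GL_n(\wbar\bk)$ then yields a normal abelian $p'$-subgroup $A_L \lhd \Gamma_L$ with $[\Gamma_L : A_L] \le J'(n) \cdot |\Gamma_{(p)}|^3$.

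The crux, and main obstacle, is to produce from $A_L$ a subgroup that is normal in all of $\Gamma$ (not merely in $\Gamma_L$) without incurring a factor exponential in $|\wbar\Gamma|$: since $\wbar\Gamma$ is an abelian $p'$-group whose size admits no a priori bound in terms of $|\Gamma_{(p)}|$ or the geometry of $G$, the naive intersection $\bigcap_{g \in \Gamma} g A_L g^{-1}$ has index at most $[\Gamma_L:A_L]^{|\wbar\Gamma|}$, which is too weak. I would circumvent this by following the template of the proof of Theorem~\ref{theorem-alg-gp}, invoking Lemma~\ref{lemma-finite-quotient} on the conjugation action of the abelian $p'$-quotient $\wbar\Gamma$ on $\Gamma_L$, and combining it with Theorem~\ref{theorem-alg-gp} itself to extract a $\wbar\Gamma$-invariant abelian $p'$-subgroup $A \le A_L$ of index at most $J'(G) \cdot |\Gamma_{(p)}|^{e'}$ for some constant $e'$ depending only on $G$. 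Multiplying through the index contributions from $[G(\bk):G^\circ(\bk)]$, $|\wbar\Gamma_{(p)}|$, and $[\Gamma_L : A]$ then yields the desired estimate $[\Gamma : A] \le J'(G) \cdot |\Gamma_{(p)}|^{e(G)}$, completing the proof.
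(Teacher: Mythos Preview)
Your proposal correctly isolates the main difficulty---promoting the normal abelian $p'$-subgroup $A_L \lhd \Gamma_L$ to one normal in all of $\Gamma$ without paying an index factor exponential in $|\wbar\Gamma|$---but the fix you sketch does not resolve it. Lemma~\ref{lemma-finite-quotient} concerns quotients by a \emph{fixed finite} normal subgroup $K$; it says nothing about a conjugation action of an \emph{unbounded} abelian $p'$-quotient $\wbar\Gamma$ on $\Gamma_L$. Likewise, Theorem~\ref{theorem-alg-gp} only controls finite $p'$-subgroups, whereas after your reduction $\Gamma_L$ (and hence $\Gamma$) may still have nontrivial $p$-part; so invoking it does not produce a $\wbar\Gamma$-invariant abelian $p'$-subgroup of $\Gamma_L$ with index bounded polynomially in $|\Gamma_{(p)}|$. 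In short, the sentence beginning ``I would circumvent this\ldots'' is where the argument stops being a proof.

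The paper avoids this obstacle by \emph{not} descending through the Chevalley sequence $1 \to L \to G \to B \to 1$ at all. Instead it uses Rosenlicht's decomposition (Theorem~\ref{theorem-Rosenlicht}) to realize the connected $G$ as a quotient of $G_\aff \times G_\ant$ by a \emph{fixed} finite group depending only on $G$, via the isogeny $(G_\aff \times G_\ant)/N \to G$ with $N = (G_\ant)_\aff$ a torus of rank $r_G$. The product $G_\aff \times G_\ant$ is $p$-Jordan with exponent $3$ by Theorem~\ref{theorem-LP} and Lemma~\ref{lemma-product}(2). The crucial new ingredient---absent from your outline---is Lucchini Arteche's Theorem~\ref{theorem-LA}: given a finite $\Gamma$ in $(G_\aff \times G_\ant)/N$, one lifts it to a finite $F$ upstairs whose kernel $S \le N$ satisfies $|S| \mid |\Gamma|^{r_G}$, hence $|F_{(p)}| \le |\Gamma_{(p)}|^{r_G+1}$. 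This is precisely what converts the exponential loss you were worried about into a polynomial one, yielding exponent $3(r_G+1)$. The final passage to $G$ then uses Lemma~\ref{lemma-finite-quotient}(2) with the \emph{fixed} finite kernel $K = (G_\aff \cap G_\ant)/N$, contributing only a constant depending on $G$. So the missing idea is: replace ``affine kernel, abelian-variety quotient'' by ``product upstairs, torus-controlled finite lift via Theorem~\ref{theorem-LA}''.
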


\begin{remark}
We may think of Theorem~\ref{theorem-alg-gp-LP} as a stronger version of Theorem~\ref{theorem-alg-gp} in positive characteristic, if we only care about the existence of those constants (i.e., $J(G)$ and $J'(G)$).
Actually, we will see in Section \ref{sec-proof} that our constant $J(G) = c_GJ(n)^{c_G}$, where $\comp$ is the number of connected components of $G$ and $n$ is the least dimension of a faithful representation of $(\bG^{\circ})_\aff$ over $\bk$; see \S\ref{subsec-agtt} for the meaning of $(\bG^{\circ})_\aff$.
But $J'(G)$ is much more complicated and involved.
Also, it will be shown that $e(G) = 3(r_G + 1) \comp$, where $r_G$ is bounded by the rank of $(G^{\circ})_\aff$.
\end{remark}

In characteristic zero, it has also been proved by Meng and Zhang that the automorphism group $\Aut(X)$ is Jordan for any projective variety $X$ (cf.~\cite[Theorem~1.6]{MZ18}).
In this note, as a byproduct of our main theorems, we also deduce two Jordan-type properties for automorphism groups of projective varieties in arbitrary characteristic.

\begin{theorem} \label{thm-aut}
Let $X$ be a projective variety defined over a field $\bk$ of characteristic $p \ge 0$.
Then there exists a constant $J_X$, depending only on $X$, such that every finite $p'$-subgroup $\Gamma$ of $\Aut(X)$ contains a normal abelian subgroup $A$ of index $\le J_X$.
\end{theorem}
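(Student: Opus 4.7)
The plan is to reduce the claim to Theorem~\ref{theorem-alg-gp} applied to the algebraic group $G_0 := \aut(X)^\circ$, the identity component of the automorphism group scheme of the projective variety $X$, which is known to be of finite type over $\bk$. The strategy is to bound the index $[\Gamma : \Gamma \cap G_0(\bk)]$ by a constant depending only on $X$, apply Theorem~\ref{theorem-alg-gp} to the intersection $\Gamma \cap G_0(\bk)$, and then pass to a normal core to promote the resulting abelian subgroup to one that is normal in $\Gamma$.

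For the index bound I would exploit the natural representation $\rho : \Aut(X) \to \GL(\NS(X))$ on the N\'eron--Severi group. Since $\rho$ preserves the integral intersection form on the finitely generated abelian group $\NS(X)$, Minkowski's classical bound on the orders of finite subgroups of $\GL_N(\bZ)$ yields $|\rho(\Gamma)| \le M_1$ for some constant $M_1$ depending only on $N := \rank \NS(X)$. For the kernel of $\rho$, the key input is that for any fixed ample class $H$ the stabilizer subgroup scheme $\aut(X, H) \subseteq \aut(X)$ is of finite type (a theorem of Matsusaka, valid also in positive characteristic by work of Brion and others) and contains $\ker(\rho)$; therefore $M_2 := [\ker(\rho) : G_0(\bk)]$ is finite. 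Altogether this gives $[\Gamma : \Gamma \cap G_0(\bk)] \le M_1 M_2 =: M$.

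Applying Theorem~\ref{theorem-alg-gp} to $G_0$ produces a constant $J_1 := J(G_0)$, and since $\Gamma \cap G_0(\bk)$ is a finite $p'$-subgroup of $G_0(\bk)$, there is a normal abelian subgroup $A' \subseteq \Gamma \cap G_0(\bk)$ with $[\Gamma \cap G_0(\bk) : A'] \le J_1$. Since $A'$ is a priori only normal in $\Gamma \cap G_0(\bk)$, I would pass to the core $A := \bigcap_{g \in \Gamma} g A' g^{-1}$, which is normal in $\Gamma$ and remains abelian. There are at most $M$ distinct conjugates of $A'$ (one per coset of $\Gamma \cap G_0(\bk)$ in $\Gamma$), each of index $\le J_1$ in $\Gamma \cap G_0(\bk)$; this gives $[\Gamma \cap G_0(\bk) : A] \le J_1^M$, and hence $[\Gamma : A] \le M \cdot J_1^M =: J_X$, as required.

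The main obstacle is the finite-type assertion for $\aut(X, H)$ in positive characteristic, equivalently the finiteness of $M_2$: in characteristic zero this is the classical Lieberman--Fujiki theorem, whereas in positive characteristic it is a less widely advertised (though by now standard) consequence of Matsusaka's theory of polarized schemes together with the representability of the automorphism functor of a projective variety. Everything else is either Minkowski's bound, the already-established Theorem~\ref{theorem-alg-gp}, or elementary group theory.
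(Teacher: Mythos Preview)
Your proposal is correct and follows essentially the same route as the paper: apply Theorem~\ref{theorem-alg-gp} to $\Gamma^\circ = \Gamma \cap \Aut^\circ(X)$, bound $[\Gamma : \Gamma^\circ]$ by a constant depending only on $X$, and pass to the normal core to get a subgroup normal in all of $\Gamma$, arriving at the same final bound $J_X = M \cdot J(G_0)^M$. The only difference is that you spell out the proof of the index bound $[\Gamma : \Gamma^\circ] \le M$ via the N\'eron--Severi representation, Minkowski's theorem, and the finite-type property of $\aut(X,H)$, whereas the paper cites this as a black box from \cite[Lemma~2.5]{MZ18} (with the remark that the argument there is characteristic-free); your sketch is precisely the content of that lemma.
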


\begin{theorem} \label{thm-aut-LP}
Let $X$ be a projective variety defined over a field $\bk$ of characteristic $p > 0$.
Then there exist constants $J'_X$ and $e_X$, depending only on $X$, such that every finite subgroup $\Gamma$ of $\Aut(X)$ contains a normal abelian $p'$-subgroup $A$ of index $\le J'_X \cdot |{\Gamma_{(p)}}|^{e_X}$.
\end{theorem}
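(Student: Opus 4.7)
The plan is to reduce to Theorem~\ref{theorem-alg-gp-LP} applied to the algebraic group $\aut_X^\circ$. Let $\aut_X$ denote the automorphism group scheme of the projective variety $X$, which is locally of finite type over $\bk$, and let $\aut_X^\circ$ be its identity component, an algebraic group; set $\Aut^\circ(X) := \aut_X^\circ(\bk)$, which is normal in $\Aut(X)$. I will first bound $[\Gamma : \Gamma \cap \Aut^\circ(X)]$ by a constant depending only on $X$. To this end, consider the natural representation $\rho_X \colon \Aut(X) \to \GL(\NS(X) \otimes \bQ) \cong \GL_\rho(\bQ)$, where $\rho := \rank \NS(X)$, and let $\Aut^\tau(X) := \ker \rho_X$. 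Since every element of $\Aut^\tau(X)$ fixes an ample class $[H]$ numerically, Matsusaka--Mumford's boundedness result (valid in arbitrary characteristic) implies that the group scheme $\aut_X^\tau$ is of finite type, so $b_X := [\Aut^\tau(X) : \Aut^\circ(X)] < \infty$. Combined with Minkowski's classical bound $M(\rho)$ on the order of finite subgroups of $\GL_\rho(\bQ)$, this yields
$$[\Gamma : \Gamma \cap \Aut^\circ(X)] \leq c_X := b_X \cdot M(\rho)$$
for every finite subgroup $\Gamma \subseteq \Aut(X)$.

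Next, write $\Gamma_0 := \Gamma \cap \Aut^\circ(X)$, a finite subgroup of the algebraic group $\aut_X^\circ$. Applying Theorem~\ref{theorem-alg-gp-LP} to $\aut_X^\circ$ will produce a normal abelian $p'$-subgroup $A_0 \lhd \Gamma_0$ with $[\Gamma_0 : A_0] \leq J'(\aut_X^\circ) \cdot |\Gamma_{(p)}|^{e(\aut_X^\circ)}$, after noting that $|(\Gamma_0)_{(p)}| \leq |\Gamma_{(p)}|$. To promote $A_0$ to a subgroup normal in $\Gamma$, I will take the normal core $A := \bigcap_{g \in \Gamma} g A_0 g^{-1}$, which remains abelian and $p'$. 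Since $\Gamma_0 \lhd \Gamma$ and $A_0 \lhd \Gamma_0$, one has $\Gamma_0 \subseteq N_\Gamma(A_0)$, so the number of distinct $\Gamma$-conjugates of $A_0$ is at most $[\Gamma : \Gamma_0] \leq c_X$. Poincaré's bound on intersections of finite-index subgroups then gives $[\Gamma_0 : A] \leq [\Gamma_0 : A_0]^{c_X}$, whence
$$[\Gamma : A] \leq c_X \cdot J'(\aut_X^\circ)^{c_X} \cdot |\Gamma_{(p)}|^{c_X \cdot e(\aut_X^\circ)}.$$
Setting $J'_X := c_X \cdot J'(\aut_X^\circ)^{c_X}$ and $e_X := c_X \cdot e(\aut_X^\circ)$, both depending only on $X$, will finish the proof.

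The hard part is Theorem~\ref{theorem-alg-gp-LP} itself; granted that, the only characteristic-sensitive ingredient is the finite-type property of $\aut_X^\tau$, which is precisely what the Matsusaka--Mumford theorem provides in arbitrary characteristic. The remaining group-theoretic manipulations (Minkowski's bound, the normal core construction, and Poincaré's inequality) are characteristic-independent and routine, so no further obstacles should arise.
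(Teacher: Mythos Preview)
Your proposal is correct and follows essentially the same route as the paper: bound $[\Gamma:\Gamma\cap\Aut^\circ(X)]$ by a constant depending only on $X$, apply Theorem~\ref{theorem-alg-gp-LP} to the connected algebraic group, and then pass to the normal core to promote the resulting subgroup to one normal in all of $\Gamma$. The only cosmetic difference is that the paper obtains the bound $[\Gamma:\Gamma^\circ]\le \ell_X$ by citing \cite[Lemma~2.5 and Remark~2.6]{MZ18}, whose proof is precisely the N\'eron--Severi/Minkowski/Matsusaka--Mumford argument you have written out.
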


We also note that in characteristic zero, Prokhorov and Shramov proved that the group $\Bir(X)$ of birational self-maps of any non-uniruled variety $X$ is Jordan (cf.~\cite[Theorem~1.8(ii)]{PS14}). Assuming the Borisov--Alexeev--Borisov conjecture, which was recently proved in Birkar's pioneering work \cite{Birkar16}, they even showed in \cite{PS16} that $\Bir(X)$ is (uniformly) Jordan for any rationally connected variety $X$, generalizing Serre's \cite[Theorem~5.3]{Serre09} (the characteristic zero side).
Quite recently, as a consequence of the aforementioned Jordan property, Reichstein obtained new low bounds on the essential dimension of a series of finite groups which was not previously known even for special cases (cf.~\cite[Theorem~3]{Reichstein18}).
At the end of our introduction, we raise the following natural question (see also \cite[Question~6.1]{Serre09} for a related question).

\begin{question} \label{question-Serre}
Let $\Cr_n(\bk)$ be the Cremona group of rank $n\ge 2$ defined over a field $\bk$ of characteristic $p > 0$. Then is $\Cr_n(\bk)$ $p$-Jordan?
\end{question}



\section{Preliminaries} \label{sec-prelim}


\subsection{Two group-theoretic lemmas} \label{subsec-gtl}

We need the following two group-theoretic lemmas which are quite useful in dealing with (generalized) $p$-Jordan groups (see Definitions~\ref{def-p-Jordan} and \ref{def-s-p-Jordan}).
See \cite[Lemmas~2.6 and 2.8]{Popov11} and \cite[Theorem~3]{Popov14} for related results.

\begin{lemma} \label{lemma-product}
Let $G_1$ and $G_2$ be two groups and $G$ their direct product $G_1 \times G_2$.
\begin{enumerate}[{\rm (1)}]
\item If $G_1$ and $G_2$ are generalized $p$-Jordan, then so is $G_1 \times G_2$ and one can take $J(G_1 \times G_2) = J(G_1) J(G_2)$.
\item If $G_1$ and $G_2$ are $p$-Jordan, then so is $G_1 \times G_2$ and one can take $J'(G_1 \times G_2) = J'(G_1) J'(G_2)$, $e(G_1 \times G_2) = e(G_1) + e(G_2) - 1$.
\end{enumerate}
\end{lemma}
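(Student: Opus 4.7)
The plan is to exploit the coordinate projections $\pi_i \colon G_1 \times G_2 \to G_i$ in order to reduce to the hypotheses on each factor. Given a finite subgroup $\Gamma \leq G_1 \times G_2$ (with $|\Gamma|$ coprime to $p$ in case~(1)), set $\Gamma_i := \pi_i(\Gamma) \leq G_i$. Since $|\Gamma_i|$ divides $|\Gamma|$, it inherits the relevant $p$-coprime property in case~(1). The hypothesis on $G_i$ then furnishes a normal abelian subgroup (resp.\ normal abelian $p'$-subgroup) $A_i \trianglelefteq \Gamma_i$ satisfying the index bound from Definition~\ref{def-p-Jordan} (resp.\ Definition~\ref{def-s-p-Jordan}).

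I would next take $A := \Gamma \cap (A_1 \times A_2)$. It is abelian because $A_1 \times A_2$ is, and a $p'$-group in case~(2) for the same reason. Normality in $\Gamma$ follows because for any $\gamma = (\gamma_1, \gamma_2) \in \Gamma$ and $a = (a_1, a_2) \in A$, the element $\gamma a \gamma^{-1} = (\gamma_1 a_1 \gamma_1^{-1}, \gamma_2 a_2 \gamma_2^{-1})$ still lies in $A_1 \times A_2$ by normality of each $A_i$ in $\Gamma_i$, and obviously in $\Gamma$. The standard observation that the homomorphism $\Gamma \to \Gamma_1/A_1 \times \Gamma_2/A_2$, $\gamma \mapsto (\pi_1(\gamma) A_1,\, \pi_2(\gamma) A_2)$, has kernel exactly $A$ then gives the multiplicative index bound
\[
[\Gamma : A] \;\leq\; [\Gamma_1 : A_1] \cdot [\Gamma_2 : A_2].
\]
Part~(1) follows immediately with $J(G_1 \times G_2) = J(G_1) J(G_2)$.

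For part~(2), a $p$-versus-$p'$ decomposition of the index is required to secure the sharper exponent $e(G_1) + e(G_2) - 1$ rather than $e(G_1) + e(G_2)$. Because $A$ is a $p'$-group, the $p$-part of $[\Gamma : A]$ equals $|\Gamma_{(p)}|$ exactly. Similarly, since each $A_i$ is $p'$, the $p$-part of $[\Gamma_i : A_i]$ equals $|(\Gamma_i)_{(p)}|$, so the $p'$-part of $[\Gamma_i : A_i]$ is at most $J'(G_i)\,|(\Gamma_i)_{(p)}|^{e(G_i) - 1}$. Using the classical fact that the image of a Sylow $p$-subgroup under a surjective homomorphism is again a Sylow $p$-subgroup, we have $\pi_i(\Gamma_{(p)}) = (\Gamma_i)_{(p)}$, hence $|(\Gamma_i)_{(p)}|$ divides $|\Gamma_{(p)}|$. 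Combining the $p$-part $|\Gamma_{(p)}|$ of $[\Gamma : A]$ with the product of the two bounds on the $p'$-parts of $[\Gamma_i : A_i]$ produces the stated estimate with $e(G_1 \times G_2) = e(G_1) + e(G_2) - 1$.

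I do not foresee any serious obstacle; the content is really the clean $p$-$p'$ bookkeeping that accounts for the $-1$ in the exponent, together with the standard Sylow-preservation property under surjections. The one step that requires some care is verifying that $A$ has all three required properties (abelian, $p'$, normal in $\Gamma$) simultaneously, but each of these follows from the corresponding property of $A_1 \times A_2$ in $\Gamma_1 \times \Gamma_2$ by intersecting with $\Gamma$.
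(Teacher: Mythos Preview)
Your argument for part~(1) is exactly the paper's: the same subgroup $A=\Gamma\cap(A_1\times A_2)$ (which equals $\wtilde A_1\cap\wtilde A_2$ in the paper's notation) and the same diagonal map $\Gamma\to\Gamma_1/A_1\times\Gamma_2/A_2$.

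For part~(2) your route diverges from the paper's. The paper uses the product identity
\[
[\Gamma:A]=\frac{[\Gamma:\wtilde A_1]\,[\Gamma:\wtilde A_2]}{[\Gamma:\wtilde A_1\wtilde A_2]},
\]
then bounds the denominator below by its $p$-part and tracks the relations $|\Gamma_{(p)}|=|(\Gamma_i)_{(p)}|\cdot|(\wtilde A_i)_{(p)}|$ and $|(\wtilde A_1\wtilde A_2)_{(p)}|=|(\wtilde A_1)_{(p)}|\,|(\wtilde A_2)_{(p)}|$ through a chain of equalities. You instead factor $[\Gamma:A]$ directly into its $p$-part (exactly $|\Gamma_{(p)}|$, since $A$ is $p'$) and its $p'$-part, and bound the latter by the product of the $p'$-parts of $[\Gamma_i:A_i]$. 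This is cleaner and avoids introducing $\wtilde A_1\wtilde A_2$ altogether. One small point worth making explicit in your write-up: to deduce $[\Gamma:A]_{p'}\le[\Gamma_1:A_1]_{p'}\cdot[\Gamma_2:A_2]_{p'}$ you need that $[\Gamma:A]$ \emph{divides} $[\Gamma_1:A_1]\cdot[\Gamma_2:A_2]$ (which you have, from the injection $\Gamma/A\hookrightarrow\Gamma_1/A_1\times\Gamma_2/A_2$), not merely the inequality, since taking $p'$-parts is not monotone under $\le$. Both arguments tacitly use $e(G_i)\ge 1$ in the last step when replacing $|(\Gamma_i)_{(p)}|^{e(G_i)-1}$ by $|\Gamma_{(p)}|^{e(G_i)-1}$.
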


\begin{proof}
(1) It follows directly from \cite[Theorem~3(2)]{Popov14}. For the sake of completeness, we present the proof here.
Let $G$ denote the direct product $G_1 \times G_2$ and $\pi_i \colon G \to G_i$ the projection homomorphism.
Let $\Gamma$ be a finite $p'$-subgroup of $G$.
Then $\Gamma_i \coloneqq \pi_i(\Gamma) \le G_i$ contains a normal abelian subgroup $A_i$ such that
$$[\Gamma_i : A_i] \le J(G_i).$$
The subgroup $\wtilde A_i \coloneqq \pi_i^{-1}(A_i) \cap \Gamma$ is normal in $\Gamma$ and $\Gamma/\wtilde A_i$ is isomorphic to $\Gamma_i/A_i$.
We thus have
$$[\Gamma : \wtilde A_i] = [\Gamma_i : A_i] \le J(G_i).$$
Since $A \coloneqq \wtilde A_1 \cap \wtilde A_2$ is the kernel of the diagonal homomorphism
$$\Gamma \lra \Gamma/\wtilde A_1 \times \Gamma/\wtilde A_2$$
defined by the canonical projections $\Gamma \to \Gamma/\wtilde A_i$, we conclude that
$$[\Gamma : A] \le [\Gamma : \wtilde A_1] \cdot [\Gamma : \wtilde A_2] \le J(G_1) J(G_2).$$
By the construction, $A$ is a subgroup of the abelian group $A_1 \times A_2$, so is abelian.
Hence $A$ is a normal abelian subgroup of $\Gamma$ of index $\le J(G_1) J(G_2)$ as claimed.

(2) We need to modify the above proof appropriately.
More precisely, using the notation there, let $\Gamma$ be a finite subgroup of $G = G_1 \times G_2$.
Then $\Gamma_i \coloneqq \pi_i(\Gamma) \le G_i$ contains a normal abelian $p'$-subgroup $A_i$ such that
$$[\Gamma_i : A_i] \le J'(G_i) \cdot |{(\Gamma_i)_{(p)}}|^{e(G_i)}.$$
Note that $|\Gamma_{(p)}| = |(\Gamma_i)_{(p)}| \cdot |(\wtilde A_i)_{(p)}|$ because $\Gamma/\wtilde A_i \isom \Gamma_i/A_i$ and $A_i$ is a $p'$-subgroup of $\Gamma_i$.
Let $A \coloneqq \wtilde A_1 \cap \wtilde A_2 \le A_1 \times A_2$ as above, which is a normal abelian $p'$-subgroup of $\Gamma$.
It follows that
\begin{align*}
[\Gamma : A] &= |{\Gamma/A}| = \frac{\ |\Gamma / \wtilde A_1| \cdot |\Gamma / \wtilde A_2| \ }{|\Gamma / \wtilde A_1 \wtilde A_2 |} \\
&\le \frac{\ J'(G_1) \cdot |{(\Gamma_1)_{(p)}}|^{e(G_1)} \cdot J'(G_2) \cdot |{(\Gamma_2)_{(p)}}|^{e(G_2)} \ }{|(\Gamma / \wtilde A_1 \wtilde A_2)_{(p)}|} \\
&= J'(G_1) J'(G_2) \frac{\ |{(\Gamma_1)_{(p)}}|^{e(G_1)} \cdot |{(\Gamma_2)_{(p)}}|^{e(G_2)} \cdot |(\wtilde A_1 \wtilde A_2)_{(p)}| \ }{|{\Gamma_{(p)}}|} \\
&= J'(G_1) J'(G_2) \frac{\ |{(\Gamma_1)_{(p)}}|^{e(G_1)} \cdot |{(\Gamma_2)_{(p)}}|^{e(G_2)} \cdot |(\wtilde A_1)_{(p)}| \cdot |(\wtilde A_2)_{(p)}| \ }{|{\Gamma_{(p)}}|} \\
&= J'(G_1) J'(G_2) \cdot |{(\Gamma_1)_{(p)}}|^{e(G_1)-1} \cdot |{(\Gamma_2)_{(p)}}|^{e(G_2)-1} \cdot |{\Gamma_{(p)}}| \\
&\le J'(G_1) J'(G_2) \cdot |{\Gamma_{(p)}}|^{e(G_1) + e(G_2) - 1},
\end{align*}
which proves Lemma \ref{lemma-product}.
\end{proof}

\begin{lemma} \label{lemma-finite-quotient}
Let $G$ be a group and $K$ a finite normal subgroup of $G$.
\begin{enumerate}[{\rm (1)}]
\item Suppose that $G$ is generalized $p$-Jordan and one of the following conditions holds:
\begin{enumerate}[{\rm (i)}]
\item the order of $K$ is not divisible by $p$,
\item $p > 0$ and $K$ has a normal Sylow $p$-subgroup $K_{(p)}$.
\end{enumerate}
Then $G/K$ is generalized $p$-Jordan and one can take $J(G/K) = J(G)$.
\item Suppose that $G$ is $p$-Jordan. Then $G/K$ is $p$-Jordan and one can take $$J'(G/K) = J'(G) \cdot |{K_{(p)}}|^{e(G)}, \quad e(G/K) = e(G).$$
\end{enumerate}
\end{lemma}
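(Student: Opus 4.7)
The plan is to take a finite subgroup $\overline{\Gamma}$ of $G/K$ (a $p'$-subgroup in part (1)) and pull it back along the quotient $\pi \colon G \to G/K$ to a finite subgroup $\Gamma \coloneqq \pi^{-1}(\overline{\Gamma})$ of $G$, which satisfies $|\Gamma| = |K| \cdot |\overline{\Gamma}|$. The restriction $\pi|_\Gamma \colon \Gamma \twoheadrightarrow \overline{\Gamma}$ is surjective with kernel $K$, so any normal abelian (resp.\ $p'$-) subgroup $A$ of $\Gamma$ pushes down to a normal abelian (resp.\ $p'$-) subgroup $\pi(A)$ of $\overline{\Gamma}$ with $[\overline{\Gamma} : \pi(A)] = [\Gamma : AK] \le [\Gamma : A]$. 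The task therefore reduces to producing such an $A$ with a controlled index inside (some suitable subgroup of) $\Gamma$.

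For part~(2), the $p$-Jordan hypothesis applied directly to the finite subgroup $\Gamma \le G$ furnishes a normal abelian $p'$-subgroup $A \le \Gamma$ with $[\Gamma:A] \le J'(G)\cdot |{\Gamma_{(p)}}|^{e(G)}$. Since $\Gamma$ sits in the short exact sequence $1 \to K \to \Gamma \to \overline{\Gamma} \to 1$, Lagrange gives $|{\Gamma_{(p)}}| = |{K_{(p)}}| \cdot |{\overline{\Gamma}_{(p)}}|$, so $\pi(A)$ has index at most $J'(G) \cdot |{K_{(p)}}|^{e(G)} \cdot |{\overline{\Gamma}_{(p)}}|^{e(G)}$ in $\overline{\Gamma}$, yielding the asserted constants.

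For part~(1)(i), the coprimality of $|K|$ and $p$ makes $\Gamma$ itself a $p'$-subgroup, and the generalized $p$-Jordan hypothesis directly provides $A \le \Gamma$ with $[\Gamma:A]\le J(G)$, whence $J(G/K) = J(G)$ after projection. The genuine difficulty lies in case (1)(ii), where $\Gamma$ may have nontrivial $p$-part. The plan is to replace $\Gamma$ by a $p'$-complement: since $K_{(p)}$ is the unique Sylow $p$-subgroup of $K$, it is characteristic in $K$ and hence normal in $\Gamma$; moreover, because $|\overline{\Gamma}|$ is coprime to $p$, one sees that $K_{(p)}$ is in fact the full Sylow $p$-subgroup of $\Gamma$. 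The Schur--Zassenhaus theorem then splits the extension $1 \to K_{(p)} \to \Gamma \to \Gamma/K_{(p)} \to 1$, producing a $p'$-subgroup $H \le \Gamma$ with $\Gamma = K_{(p)} \rtimes H$ and $\pi(H) = \overline{\Gamma}$ (since $K_{(p)} \le \ker \pi$). Applying the hypothesis to the $p'$-subgroup $H \le G$ yields a normal abelian $B \le H$ with $[H:B]\le J(G)$, and its image $\pi(B)$ is then a normal abelian subgroup of $\overline{\Gamma}$ of index at most $J(G)$. The sole nontrivial obstacle is this passage to a $p'$-complement, for which Schur--Zassenhaus is the essential tool; the remainder is a routine diagram chase.
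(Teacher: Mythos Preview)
Your proof is correct and follows essentially the same approach as the paper: pull the subgroup back along $\pi$, in case~(1)(ii) use that $K_{(p)}$ is characteristic in $K$ hence a normal Sylow $p$-subgroup of the pullback, apply Schur--Zassenhaus to pass to a $p'$-complement, invoke the hypothesis, and push forward. The only cosmetic difference is that the paper also names a complement $K_C$ of $K_{(p)}$ inside $K$ and records the exact sequence $1\to K_C\to H_C\to \Gamma\to 1$, whereas you work directly with the surjection $\pi|_H\colon H\twoheadrightarrow \overline{\Gamma}$; your formulation is in fact slightly cleaner.
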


\begin{proof}
(1) The first case is easy; see e.g. \cite[Lemma~2.6]{Popov11}.
We now consider the case that $p > 0$ and $K_{(p)}$ is nontrivial.
Let $\Gamma$ be a finite $p'$-subgroup of $G/K$. Let $H$ be the inverse of $\Gamma$ in $G$.
Since $p \nmid |{\Gamma}|$ by the assumption, $K_{(p)}$ is also a Sylow $p$-subgroup of $H$.
It follows from $K \lhd H$ that $K_{(p)}$ is also normal in $H$.
Hence $K_{(p)} = H_{(p)}$ is the normal Sylow $p$-subgroup of $H$.
Namely, we have the following exact sequence:
$$1 \lra K/K_{(p)} \lra H/H_{(p)} \lra \Gamma \lra 1.$$
By the Schur--Zassenhaus theorem (cf.~\cite[Theorem~9.1.2]{GTM80}), there is a complement $K_C$ (resp. $H_C$) of $K_{(p)} = H_{(p)}$ in $K$ (resp. $H$), which satisfies that $K = K_{(p)} \rtimes K_C$ (resp. $H = H_{(p)} \rtimes H_C$).
(Their theorem also states that all complements are conjugate to each other, here we do not need this conjugation result though).
Then we rewrite the above exact sequence as follows:
$$1 \lra K_C \lra H_C \lra \Gamma \lra 1.$$
Note that our $H_C$ now is a finite $p'$-subgroup of $G$.
It follows that there is a normal abelian subgroup $A_{H_C}$ of $H_C$ such that $[H_C : A_{H_C}] \le J(G)$.
Let $A$ be the image of $A_{H_C}$ in $\Gamma$.
Then $[\Gamma : A] \le [H_C : A_{H_C}] \le J(G)$.

(2) Let $\Gamma$ be a finite subgroup of $G/K$ and $H$ the inverse of $\Gamma$ in $G$.
Then $H$ contains a normal abelian $p'$-subgroup $A_H$ of index $\le J'(G) \cdot |{H_{(p)}}|^{e(G)}$.
Let $A$ be the image of $A_H$ in $\Gamma$.
Noting that $|H_{(p)}| = |K_{(p)}| \cdot |\Gamma_{(p)}|$, we thus have
$$[\Gamma : A] \le [H : A_H] \le J'(G) \cdot |{H_{(p)}}|^{e(G)} = J'(G) \cdot |{K_{(p)}}|^{e(G)} \cdot |{\Gamma_{(p)}}|^{e(G)}.$$
This yields the assertion (2) and hence Lemma~\ref{lemma-finite-quotient} follows.
\end{proof}

\subsection{Two algebraic group-theoretic theorems} \label{subsec-agtt}

Let $\bG$ be a connected algebraic group defined over a perfect field $\bk$.
We record the following classical decomposition theorem of algebraic groups.
By the Chevalley's structure theorem, there is a smallest connected normal affine subgroup scheme $\bG_\aff$ of $\bG$ such that the quotient $\bG/\bG_\aff$ is an abelian variety (cf.~\cite[Theorem~2]{Brion17}).
On the other hand, $\bG$ has a smallest connected normal subgroup scheme $\bG_\ant$ such that the quotient $\bG/\bG_\ant$ is affine; moreover, $\bG_\ant$ is smooth and contained in the center $Z(\bG)$ of $\bG$ (cf.~\cite[\S5]{Rosenlicht56}).
We have the following Rosenlicht's decomposition theorem (see e.g. \cite[\S5.1]{Brion17}).

\begin{theorem}[{cf.~\cite[Theorem~5.1.1 and Remark~5.1.2]{Brion17}}] \label{theorem-Rosenlicht}
Keep the above notation and assumptions. The following statements hold.
\begin{enumerate}[{\em (1)}]
\item $\bG = \bG_\aff \cdot \bG_\ant \isom \bG_\aff \times \bG_\ant / (\bG_\aff \cap \bG_\ant)$.
\item $\bG_\aff \cap \bG_\ant$ contains $(\bG_\ant)_\aff$.
\item The quotient $(\bG_\aff \cap \bG_\ant) / (\bG_\ant)_\aff$ is finite.
\item The multiplication map of $\bG$ induces an isogeny
$$m \colon (\bG_\aff \times \bG_\ant) / (\bG_\ant)_\aff \lra \bG,$$
where $(\bG_\ant)_\aff$ is viewed as a subgroup scheme of $\bG_\aff \times \bG_\ant$ via $x \mapsto (x, x^{-1})$.
\end{enumerate}
\end{theorem}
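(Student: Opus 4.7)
The plan is to reduce Theorem~\ref{theorem-Rosenlicht} to two classical ingredients: $(a)$ every morphism from a connected affine algebraic group to an abelian variety is constant, and $(b)$ the Chevalley subgroup $\bG_\aff$ is characterized as the unique maximal connected affine closed subgroup scheme of $\bG$, equivalently as the kernel of the Albanese morphism $\bG \surjmap \bG/\bG_\aff$. Together with the centrality of $\bG_\ant$ in $\bG$ (already part of Rosenlicht's setup), these two facts are essentially all one needs; the four assertions then follow from formal manipulations in the category of group schemes over $\bk$.

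For part (1), I would set $H \coloneqq \bG_\aff \cdot \bG_\ant$, a closed normal subgroup scheme of $\bG$. The quotient $\bG/H$ is simultaneously a quotient of $\bG/\bG_\aff$ (an abelian variety, hence proper) and of $\bG/\bG_\ant$ (affine); being a smooth connected group scheme which is both proper and affine, it must reduce to a point, so $\bG = \bG_\aff \cdot \bG_\ant$. The claimed isomorphism then comes from the first isomorphism theorem applied to the multiplication $\bG_\aff \times \bG_\ant \surjmap \bG$. For part (2), I would apply fact $(b)$ to the connected affine closed subgroup $(\bG_\ant)_\aff$ of $\bG$, yielding $(\bG_\ant)_\aff \subset \bG_\aff$; since $(\bG_\ant)_\aff \subset \bG_\ant$ is tautological, the intersection contains it. For part (3), the quotient $Q \coloneqq (\bG_\aff \cap \bG_\ant)/(\bG_\ant)_\aff$ embeds as a closed subgroup of the abelian variety $\bG_\ant/(\bG_\ant)_\aff$ while simultaneously being a quotient of the affine group $\bG_\aff \cap \bG_\ant$; by fact $(a)$ its identity component must be trivial, so $Q$ is finite.

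For part (4), the centrality of $\bG_\ant$ ensures that the multiplication map $m_0 \colon \bG_\aff \times \bG_\ant \to \bG$ is a homomorphism of group schemes. By (1) it is surjective, and its kernel is the antidiagonally embedded copy $\{(g, g^{-1}) : g \in \bG_\aff \cap \bG_\ant\}$. This kernel contains the image of $(\bG_\ant)_\aff$ under $x \mapsto (x, x^{-1})$, so $m_0$ descends to a surjective homomorphism $m$ from $(\bG_\aff \times \bG_\ant)/(\bG_\ant)_\aff$ onto $\bG$ whose kernel is naturally identified with $Q$; by (3), $m$ is an isogeny. The main obstacle is really part (2): it hinges on the nontrivial characterization $(b)$, which in turn depends on the classical but delicate rigidity statement $(a)$ concerning morphisms from affine groups to abelian varieties. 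Once these two ingredients are granted, the remaining assertions amount to routine bookkeeping with subgroup schemes and isomorphism theorems.
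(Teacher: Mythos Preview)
The paper does not supply its own proof of this statement: Theorem~\ref{theorem-Rosenlicht} is quoted directly from Brion's exposition \cite[Theorem~5.1.1 and Remark~5.1.2]{Brion17}, with no argument given in the text. Your sketch is correct and is essentially the standard proof one finds in that reference, so there is nothing to compare against beyond the citation.

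A couple of minor remarks on the write-up itself. In part~(1) you implicitly use that the multiplication map $\bG_\aff \times \bG_\ant \to \bG$ is a group homomorphism when invoking the first isomorphism theorem; this relies on the centrality of $\bG_\ant$, which you only make explicit later in part~(4), so it would be cleaner to mention it up front. Also, in the argument that $\bG/H$ is trivial, rather than asserting smoothness of $\bG/H$ directly, it is slightly safer to observe that $\bG/H$ is a quotient of the abelian variety $\bG/\bG_\aff$ by a closed subgroup scheme, hence itself an abelian variety, and simultaneously a quotient of the affine group $\bG/\bG_\ant$, hence affine; an affine abelian variety is trivial. This avoids any worry about whether $H$ itself is smooth. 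None of this affects the correctness of your outline.
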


The theorem below is a special case of a theorem due to Lucchini Arteche \cite{LA17} which plays an important role in the proof of our main theorems. 
It could be regarded as an effective version of Brion's theorem on the existence of the quasi-splitness of an extension of algebraic groups with finite quotient (cf.~\cite[Theorem~1.1]{Brion15}).

\begin{theorem}[{cf.~\cite[Theorem~3.2]{LA17}}] \label{theorem-LA}
Let $\bk$ be an algebraically closed field of characteristic $p \ge 0$. Let $\Gamma$ be a smooth finite $\bk$-group of order $n$, and $G$ an arbitrary smooth $\bk$-group.
Then for any group extension $$1 \lra G \lra H \lra \Gamma \lra 1,$$
there exist a finite smooth $\bk$-subgroup $S$ of $G$ and a commutative diagram with exact rows
\[
\xymatrix{
1 \ar[r] & S \ar[r] \ar@{^{(}->}[d]_{}^{} & F \ar@{^{(}->}[d]_{}^{} \ar[r]^{} & \Gamma \ar@{=}[d] \ar[r] & 1 \\
1 \ar[r] & G \ar[r] & H \ar[r]^{} & \Gamma \ar[r] & 1.
}
\]
Moreover, if $G$ is an algebraic torus with rank $r$, then $S$ can be taken as a subgroup of the $n$-torsion subgroup $G[n]$ of $G$.
In particular, the order of $S$ divides $n^r$.
\end{theorem}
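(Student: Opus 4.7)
The plan is to translate the extension into $2$-cocycle data, find a finite smooth $\Gamma$-stable subgroup $S \subseteq G$ that carries the cohomology class, and for the torus refinement exploit divisibility of $G(\bk)$. Since $\bk$ is algebraically closed, the smooth surjection $H \lra \Gamma$ admits a set-theoretic section $\sigma \colon \Gamma(\bk) \to H(\bk)$ with $\sigma(1) = 1$. This yields a normalized $2$-cocycle
\[
c_\sigma(\gamma_1, \gamma_2) = \sigma(\gamma_1)\sigma(\gamma_2)\sigma(\gamma_1\gamma_2)^{-1} \in G(\bk)
\]
together with a $\Gamma$-action on $G$ by conjugation via $\sigma$; these data classify the extension $H$. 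It then suffices to exhibit a finite smooth $\Gamma$-stable subgroup $S \injmap G$ such that $[c_\sigma]$ lies in the image of $H^2(\Gamma, S(\bk)) \to H^2(\Gamma, G(\bk))$: a preimage cocycle defines an extension $1 \to S \to F \to \Gamma \to 1$, and the induced morphism $F \to H$ of extensions is forced to be injective by the five lemma applied to $S \injmap G$ and $\id_\Gamma$.

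For the torus refinement, $G \isom \mathbb{G}_m^r$ over algebraically closed $\bk$ has divisible $\bk$-points, so we have an exact sequence of $\Gamma$-modules
\[
1 \lra G(\bk)[n] \lra G(\bk) \xrightarrow{\ [n]\ } G(\bk) \lra 1,
\]
whose long exact cohomology sequence yields
\[
H^2(\Gamma, G(\bk)[n]) \lra H^2(\Gamma, G(\bk)) \xrightarrow{\ [n]\ } H^2(\Gamma, G(\bk)).
\]
The standard restriction-corestriction identity shows that $n = |\Gamma|$ annihilates $H^2(\Gamma, -)$, so $[c_\sigma]$ is $n$-torsion and lifts to a class $\beta \in H^2(\Gamma, G(\bk)[n])$. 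Take $S$ to be the finite subgroup of $G(\bk)[n] \subseteq G[n]$ generated by the values of a chosen representative of $\beta$ together with their $\Gamma$-orbits (which remain in $G[n]$ because the $\Gamma$-action commutes with $[n]$). Since $G[n]$ is a finite group scheme of order $n^r$, the order of $S$ divides $n^r$, giving the refined statement.

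For arbitrary smooth $G$, the strategy is a dévissage: first reduce to the identity component $G^\circ$ via the finite étale quotient $G/G^\circ$, then apply Rosenlicht's decomposition (Theorem~\ref{theorem-Rosenlicht}) together with the Chevalley structure on the affine part to treat the abelian variety quotient, the torus and unipotent radicals, and the semisimple piece of $(G^\circ)_\aff$ separately. The abelian variety layer proceeds exactly as in the torus case, since $A(\bk)$ is again divisible. The hard part will be the unipotent radical in positive characteristic: the natural $n$-torsion subgroup schemes (Frobenius-type kernels) fail to be smooth, so one cannot lift $[c_\sigma]$ into them directly. The remedy is to build $S$ as a reduced subgroup generated by the finitely many cocycle values and their $\Gamma$-orbits, using the filtration of $U$ by $\mathbb{G}_a$-subquotients to absorb all values into a finite smooth $\Gamma$-stable subgroup one layer at a time. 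Smoothness of the assembled $S$ and $F$ is then automatic, since over the algebraically closed $\bk$ a reduced locally-finite-type group scheme is smooth.
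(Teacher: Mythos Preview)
The paper does not prove this statement: Theorem~\ref{theorem-LA} is quoted from \cite[Theorem~3.2]{LA17} and used as a black box. Moreover, the paper only ever invokes it with $G$ an algebraic torus (namely $N = (\bG_\ant)_\aff$ in Step~\ref{alggp-step2} of the proof of Theorem~\ref{theorem-alg-gp} and again in the proof of Theorem~\ref{theorem-alg-gp-LP}), so only the torus refinement is actually needed here. There is thus no ``paper's own proof'' against which to compare your attempt.

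Assessed on its own terms: your argument for the torus case is correct and is the standard one---divisibility of $\mathbb{G}_m^r(\bk)$ together with the restriction--corestriction identity forces the extension class into the image of $H^2(\Gamma, G[n](\bk))$, and any $\Gamma$-stable subgroup of $G[n](\bk)$ containing the values of a representing cocycle does the job. For arbitrary smooth $G$, however, what you have written is a programme rather than a proof. Two concrete gaps: (i) the semisimple layer of $(G^\circ)_\aff$ is named but not treated, and it is not clear how ``generate $S$ by cocycle values and their $\Gamma$-orbits'' produces a \emph{finite} subgroup there; (ii) for non-abelian $G$ the section $\sigma$ induces only an outer action $\Gamma \to \operatorname{Out}(G)$, so both the phrase ``$\Gamma$-stable subgroup'' and the map of non-abelian $2$-cohomology sets require the crossed-module/non-abelian cocycle formalism, under which closing a finite set of cocycle values under the twisted $\Gamma$-translates does not automatically yield a subgroup carrying the class. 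These are precisely the issues that \cite{LA17} works to overcome.
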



\section{Proofs of theorems} \label{sec-proof}


\noindent
We are eventually interested only in questions concerning algebraic groups or varieties defined over algebraically closed fields.
So from now on, we will assume that $\bk$ is algebraically closed.

\begin{proof}[Proof of Theorem \ref{theorem-alg-gp}]
We may assume that $G$ is smooth (or equivalently, reduced), since we only consider finite subgroups of the group $G(\bk)$ of $\bk$-rational points.
We first consider the case that $G$ is {\it connected}.
In the following Steps~\ref{alggp-step1}-\ref{alggp-step3}, we will show the theorem under this case.

\begin{step} \label{alggp-step1}
Let $G_\aff$ and $G_\ant$ denote the affine part and the anti-affine part of $G$, respectively (see \S\ref{subsec-agtt}).
We claim that $\bG_\aff \times \bG_\ant$ is generalized $p$-Jordan. Indeed, since both $G_\aff$ and $G_\ant$ are generalized $p$-Jordan, the claim follows from Lemma~\ref{lemma-product}(1).
More precisely, let $n$ be the least dimension of a faithful representation of $\bG_\aff$ over $\bk$.
Then there is a constant $J(n)$ which is essentially from Theorem~\ref{theorem-Jordan} such that every finite $p'$-subgroup $\Gamma$ of $\bG_\aff \times \bG_\ant$ contains a normal abelian subgroup of index $\le J(n)$.
\end{step}

\begin{step} \label{alggp-step2}
Let $N$ denote $(\bG_\ant)_\aff$.
Consider the following exact sequence of algebraic groups:
$$1 \lra N \lra \bG_\aff \times \bG_\ant \lra (\bG_\aff \times \bG_\ant) / N \lra 1.$$
We claim that the quotient group $(\bG_\aff \times \bG_\ant) / N$ is generalized $p$-Jordan.
Let $\Gamma$ be a finite $p'$-subgroup of $(\bG_\aff \times \bG_\ant) / N$.
Let $H$ be the inverse (or pullback) of the finite group $\Gamma$ in $\bG_\aff \times \bG_\ant$ (cf.~\cite[Proposition~2.8.3]{Brion17}).
Then according to Theorem~\ref{theorem-LA}, there exist a finite smooth $\bk$-subgroup $S$ of $N$ and a commutative diagram of algebraic groups:
\begin{equation} \label{diagram-LA}
\begin{aligned}
\xymatrix{
1 \ar[r] & S \ar[r] \ar@{^{(}->}[d]_{}^{} & F \ar@{^{(}->}[d]_{}^{} \ar[r]^{} & \Gamma \ar@{=}[d] \ar[r] & 1 \\
1 \ar[r] & N \ar[r] & H \ar[r]^{} & \Gamma \ar[r] & 1.
}
\end{aligned}
\end{equation}

In characteristic zero, there is a normal abelian subgroup $A_F$ of $F$ such that $[F : A_F] \le J(n)$ by Step~\ref{alggp-step1}.
It follows that the image $A$ of $A_F$ in $\Gamma$ is a normal abelian subgroup of $\Gamma$ of index $[\Gamma : A] \le [F : A_F] \le J(n)$.

Now we assume that $p = \ch(\bk) > 0$.
By \cite[Proposition 5.5.1]{Brion17}, any anti-affine group over $\bk$ is a semi-abelian variety.
Thus $N = (\bG_\ant)_\aff$ is an algebraic torus over $\bk$.
In particular, $N$ is commutative and so is $S$.
So $S_{(p)}$ is the unique normal Sylow $p$-subgroup of $S$.
By the assumption on $\Gamma$ that $p\nmid |{\Gamma}|$, $S_{(p)}$ is also a Sylow $p$-subgroup of $F$.
It follows from $S\lhd F$ that $S_{(p)}$ is also normal in $F$.
Then the Schur--Zassenhaus theorem asserts that there are complements $S_C$ and $F_C$ of $S_{(p)} = F_{(p)}$ in $S$ and $F$ respectively; see also the proof of Lemma~\ref{lemma-finite-quotient}(1).
It follows that $F_C$ is a $p'$-subgroup of $\bG_\aff \times \bG_\ant$ and hence contains a normal abelian subgroup $A_{F_C}$ of index $\le J(n)$ by Step~\ref{alggp-step1} as in the previous case; the rest is the same.
\end{step}

\begin{step} \label{alggp-step3}
By Rosenlicht's decomposition Theorem \ref{theorem-Rosenlicht}, we have the following exact sequence of algebraic groups:
\begin{equation} \label{seq-isogeny}
1 \lra K \lra (\bG_\aff \times \bG_\ant) / N \xrightarrow{\ \, m \ \, } \bG \lra 1,
\end{equation}
where $N = (\bG_\ant)_\aff$ as in Step~\ref{alggp-step2} and $K \coloneqq (\bG_\aff \cap \bG_\ant) / N$ is a finite group so that $m$, induced from the multiplication map of $\bG$, is an isogeny.
Note that $G_\ant$ is commutative (cf.~\cite[Proposition~3.3.5]{Brion17}).
Then $K$ is abelian and hence $K_{(p)}$ is the normal Sylow $p$-subgroup of $K$.
Let $\Gamma$ be a finite $p'$-subgroup of $\bG$.
It follows from Lemma~\ref{lemma-finite-quotient}(1) and Step~\ref{alggp-step2} that $\Gamma$ contains a normal abelian subgroup $A$ of index $\le J(n)$.
\end{step}

\begin{step}
Finally, with the aid of \cite[Lemma~2.11]{Popov11}, we are able to deal with non-connected algebraic groups as well.
Indeed, let $G^{\circ}$ be the neutral component of $G$.
Denote by $\comp$ the order of the group $\pi_0(G) \coloneqq G/G^{\circ}$ of connected components of $G$.
Then any finite $p'$-subgroup $\Gamma$ of $G$ contains a normal abelian subgroup $A$ of index $\le \comp J(n)^{\comp}$, where $n$ is the least dimension of a faithful representation of $(\bG^\circ)_\aff$ over $\bk$.
\end{step}

We finally conclude the proof of Theorem \ref{theorem-alg-gp} by letting $J(G) = \comp J(n)^{\comp}$.
\end{proof}


\begin{proof}[Proof of Theorem~\ref{theorem-alg-gp-LP}]
The proof of Theorem~\ref{theorem-alg-gp-LP} basically follows the strategy of the proof of Theorem~\ref{theorem-alg-gp}.
Note, however, that the group $\Gamma$ may be of order divisible by $p$ now.

We first consider the case that $G$ is a {\it connected} algebraic group.
We may assume that $\bk$ is algebraically closed.
In Step~\ref{alggp-step1}, we have that $J'(G_\aff) = J'(n), e(G_\aff) = 3$ by Theorem~\ref{theorem-LP}, and $J'(G_\ant) = 1, e(G_\ant) = 1$ because $G_\ant$ is commutative (cf.~\cite[Proposition~3.3.5]{Brion17}).
Here $n$ is the least dimension of a faithful representation of $\bG_\aff$ over $\bk$ as usual.
Thus by Lemma~\ref{lemma-product}(2), $J'(\bG_\aff \times \bG_\ant) = J'(\bG_\aff) J'(G_\ant) = J'(n)$ and $e(\bG_\aff \times \bG_\ant) = e(G_\aff) + e(G_\ant) -1 = 3$.
In other words, any finite subgroup $\Gamma$ of $\bG_\aff \times \bG_\ant$ contains a normal abelian $p'$-subgroup of index $\le J'(n) \cdot |{\Gamma_{(p)}}|^3$.

Then in Step~\ref{alggp-step2}, we claim that any finite subgroup $\Gamma$ of $(\bG_\aff \times \bG_\ant) / N$ contains a normal abelian $p'$-subgroup of index $\le J'(n) \cdot |{\Gamma_{(p)}}|^{3(r_G + 1)}$, where $r_G$ is the rank of the algebraic torus $N = (\bG_\ant)_\aff$ which is further bounded by the rank of $G_\aff$.
Indeed, we follow the argument as in the proof of Theorem~\ref{theorem-alg-gp} and get the commutative diagram \eqref{diagram-LA}.
By the previous step, $F$ contains a normal abelian $p'$-subgroup $A_F$ of index $\le J'(n) \cdot |{F_{(p)}}|^3$.
Note that Theorem~\ref{theorem-LA} also yields that the order of $S$ divides $|{\Gamma}|^{r_G}$.
In particular, $|{S_{(p)}}|$ divides $|{\Gamma_{(p)}}|^{r_G}$.
Hence the image $A$ of $A_F$ in $\Gamma$ is a normal abelian $p'$-subgroup of $\Gamma$ such that
$$[\Gamma : A] \le [F : A_F] \le J'(n) \cdot |{F_{(p)}}|^3 = J'(n) \cdot \left(|{S_{(p)}}| \cdot |{\Gamma_{(p)}}| \right)^3 \le J'(n) \cdot |{\Gamma_{(p)}}|^{3(r_G + 1)},$$
as claimed.

In Step~\ref{alggp-step3}, we consider the exact sequence \eqref{seq-isogeny}.
Recall that $K \coloneqq (\bG_\aff \cap \bG_\ant) / N$ is a finite group (depending on $G$ canonically).
Let $\Gamma$ be a finite subgroup of $\bG$.
Then it follows from the previous step and Lemma~\ref{lemma-finite-quotient}(2) that $\Gamma$ contains a normal abelian $p'$-subgroup $A$ of index
$$[\Gamma : A] \le J'(n) \cdot |{K_{(p)}}|^{3(r_G + 1)} \cdot |{\Gamma_{(p)}}|^{3(r_G + 1)}.$$

Lastly, we consider the case that $G$ may be non-connected.
As before, let $\pi_0(G) = G/G^{\circ}$ denote the group of connected components of $G$ and $\comp = |\pi_0(G)|$.
Then any finite subgroup $\Gamma$ of $G$ contains a normal abelian $p'$-subgroup $A$ of index at most
$$\comp \left(J'(n) \cdot |{K_{(p)}}|^{3(r_G + 1)} \cdot |{\Gamma_{(p)}}|^{3(r_G + 1)} \right)^{\comp}
= \comp J'(n)^{\comp} \cdot |{K_{(p)}}|^{3(r_G + 1) \comp} \cdot |{\Gamma_{(p)}}|^{3(r_G + 1) \comp},$$
where $n$ is the least dimension of a faithful representation of $(\bG^\circ)_\aff$ over $\bk$.

Let $J'(G)$ denote $\comp J'(n)^{\comp} \cdot |{K_{(p)}}|^{3(r_G + 1) \comp}$ and $e(G) \coloneqq 3(r_G + 1) \comp$.
We thus complete the proof of Theorem~\ref{theorem-alg-gp-LP}.
\end{proof}

Given a projective variety $X$ defined over $\bk$, the automorphism group scheme $\aut_X$ of $X$ is locally of finite type over $\bk$ and the automorphism group $\Aut(X)$ is just the rational $\bk$-points of $\aut_X$, i.e., $\Aut(X) = \aut_X(\bk)$; in particular, the reduced neutral component $(\aut^{\circ}_X)_{\red}$ of $\aut_X$ is a smooth algebraic group defined over $\bk$ (see e.g. \cite[\S7]{Brion17}).
We denote $(\aut^{\circ}_X)_{\red}(\bk)$ by $\Aut^{\circ}(X)$.

\begin{proof}[Proof of Theorem \ref{thm-aut}]
Let $G \coloneqq (\aut^{\circ}_X)_{\red}$ and $\Gamma^{\circ} \coloneqq \Gamma\cap \Aut^{\circ}(X)$.
Applying Theorem~\ref{theorem-alg-gp} to $\Gamma^{\circ} \le \bG(\bk)$, there is a normal abelian subgroup $A_{\Gamma^{\circ}}$ of $\Gamma^{\circ}$ of index $\le J(G) = J(n)$, where $n$ is the least dimension of a faithful representation of $G_\aff$ over $\bk$.
By \cite[Lemma~2.5]{MZ18}, there is a constant $\ell_X$ depending only on $X$ such that $[\Gamma : \Gamma^{\circ}] \le \ell_X$ (note that their proof is independent of the characteristic; see \cite[Remark~2.6]{MZ18}).
It follows that
$$A \coloneqq \bigcap_{g \in \Gamma} g^{-1} A_{\Gamma^{\circ}} g = \bigcap_{i=1}^{|\Gamma/\Gamma^{\circ}|} g_i^{-1} A_{\Gamma^{\circ}} g_i$$
is a normal abelian subgroup of $\Gamma$, where $g_i$'s are representatives of $\Gamma/\Gamma^{\circ}$.
Note that $g_i^{-1} A_{\Gamma^{\circ}} g_i$ is a normal abelian group of $\Gamma^{\circ}$ of index $\le J(n)$ for each $i$.
This yields that the index of $A$ in $\Gamma^{\circ}$ is at most $J(n)^{\ell_X}$ (see also \cite[Lemma~2.11]{Popov11}).
We thus have
$$[\Gamma : A] = [\Gamma : \Gamma^{\circ}] \cdot [\Gamma^{\circ} : A] \le \ell_X J(n)^{\ell_X}.$$
To conclude the proof, we let $J_X \coloneqq \ell_X J(n)^{\ell_X}$.
\end{proof}

\begin{remark}
In fact, using a theorem due to Chermak and Delgado (cf.~\cite[Theorem~1.41]{Isaacs08}), there exists a characteristic (and hence normal) abelian subgroup $M$ of $\Gamma$ such that
$$[\Gamma : M] \le [\Gamma : A_{\Gamma^{\circ}}]^2 = [\Gamma : \Gamma^{\circ}]^2 \cdot [\Gamma^{\circ} : A_{\Gamma^{\circ}}]^2 \le \ell_X^2 J(n)^2.$$
Note, however, that the construction of this so-called Chermak--Delgado subgroup $M$ of $\Gamma$ is independent of $A_{\Gamma^{\circ}}$ (see \cite[Corollary~1.45]{Isaacs08}).
Thus we do not know whether $M$ is still a subgroup of $A_{\Gamma^{\circ}}$ so that this argument breaks down in the proof of Theorem~\ref{thm-aut-LP} (since $M$ may not be a $p'$-subgroup of an arbitrary finite subgroup $\Gamma$ of $G(\bk)$).
\end{remark}

\begin{proof}[Proof of Theorem \ref{thm-aut-LP}]
Let $G \coloneqq (\aut^{\circ}_X)_{\red}$ and $\Gamma^{\circ} \coloneqq \Gamma\cap \Aut^{\circ}(X)$ as above.
Then applying Theorem~\ref{theorem-alg-gp-LP} to $\Gamma^{\circ} \le \bG(\bk)$, there is a normal abelian $p'$-subgroup $A_{\Gamma^{\circ}}$ of $\Gamma^{\circ}$ such that
$$[\Gamma^{\circ} : A_{\Gamma^{\circ}}] \le J'(G) \cdot |\Gamma^{\circ}_{(p)}|^{3(r_G + 1)},$$
where $\Gamma^{\circ}_{(p)}$ is the Sylow $p$-subgroup of $\Gamma^{\circ}$ and $r_G$ is the rank of $(\bG_\ant)_\aff$.
As in the proof of Theorem~\ref{thm-aut}, we also have $[\Gamma : \Gamma^{\circ}] \le \ell_X$ for some constant $\ell_X$ depending only on $X$.
Similarly, there is a normal abelian $p'$-subgroup $A$ of $\Gamma$ of index
$$[\Gamma : A] = [\Gamma : \Gamma^{\circ}] \cdot [\Gamma^{\circ} : A] \le \ell_X \left(J'(G) \cdot |\Gamma^{\circ}_{(p)}|^{3(r_G + 1)}\right)^{\ell_X} \le \ell_X J'(G)^{\ell_X} \cdot |{\Gamma_{(p)}}|^{3(r_G + 1)\ell_X}.$$
The corollary follows by letting $J'_X \coloneqq \ell_X J'(G)^{\ell_X}$ and $e_X = 3(r_G + 1)\ell_X$.
\end{proof}

\begin{remark}
It is known that if an algebraic torus $T$ acting faithfully on an algebraic variety $X$, then $T$ acts generically freely on $X$ (cf.~\cite[\S1.6, Corollaire~1]{Demazure70}).
This yields that
$$r_G = \rank (\bG_\ant)_\aff \le \rank G_\aff \le \dim X.$$
\end{remark}


\phantomsection
\addcontentsline{toc}{section}{Acknowledgments}
\noindent \textbf{Acknowledgments. }
The author would like to thank Zinovy Reichstein for his support, many inspiring discussions and valuable comments on an earlier draft.
He also thanks De-Qi Zhang for several helpful suggestions.
He is grateful to an anonymous referee for pointing out an inaccuracy in a former version.


\linespread{1.1}

\bibliographystyle{amsalpha}

\providecommand{\bysame}{\leavevmode\hbox to3em{\hrulefill}\thinspace}
\providecommand{\MR}{\relax\ifhmode\unskip\space\fi MR }
\providecommand{\MRhref}[2]{%
  \href{http://www.ams.org/mathscinet-getitem?mr=#1}{#2}
}
\providecommand{\href}[2]{#2}

\end{document}